
\documentclass[10pt, final]{amsart}
\usepackage[english]{babel}
\usepackage{latexsym}
\usepackage{amssymb}
\usepackage{amscd}
\usepackage[cp850]{inputenc}
\usepackage[mathscr]{eucal}

\tolerance=2000

\setlength{\oddsidemargin}{0.5cm}
\setlength{\evensidemargin}{0.5cm} \setlength{\textwidth}{15.5cm}

\theoremstyle{plain}

\newtheorem{theorem}{Theorem}
\newtheorem{proposition}{Proposition}

\newtheorem{cor}{Corollary}
\newtheorem{lema}{Lemma}
\theoremstyle{remark}

\newcommand{\R}{\mathbb{R}}
\newcommand{\N}{\mathbb{N}}

\def\Ext{\operatorname{Ext}}

\bibliographystyle{plain}


\newcommand{\aproof}{\begin{proof}}
\newcommand{\zproof}{\end{proof}}

\begin{document}
\title{Nonseparable $C(K)$-spaces can be twisted\\ when $K$ is a finite height compact}

\author{Jes\'us M. F. Castillo}

\address{Departamento de Matem\'aticas\\ Universidad de Extremadura\\
Avenida de Elvas\\ 06011-Badajoz\\ Spain} \email{castillo@unex.es}

\thanks{Thanks are due to a demanding referee who pushed the author to look for nicer arguments and, in the end, to produce a better paper. Additional thanks are due to Daniel Tausk who made a few accurate remarks about the content of the paper.}

\thanks{This research was supported by project MTM2013-45643-C2-1-P, Spain.}

\maketitle

\begin{abstract} We show that, given a nonmetrizable compact space $K$ having $\omega$-derived set empty, there always exist nontrivial exact sequences $0\to c_0\to E\to C(K)\to 0$. This partially solves a problem posed in several papers: Is $\Ext(C(K), c_0)\neq 0$ for $K$ a nonmetrizable compact set?\end{abstract}

\section{Introduction}

A general problem in Banach space theory is to determine, given two Banach spaces $Y$ and $Z$, the existence and properties of Banach spaces $X$ containing $Y$ such that $X/Y=Z$. The space $X$ is called a twisted sum of $Y$ and $Z$. When only the trivial situation $X=Y\oplus Z$ is possible one obtains an interesting structure result asserting that every copy of $Y$ in a space $X$ such that $X/Y=Z$ must be complemented. When some $X\neq Y\oplus Z$ as above exists, one obtains a usually exotic and interesting space with unexpected properties; perhaps the perfect example could be the Kalton-Peck $Z_2$ space \cite{kaltpeck} which has no unconditional basis while containing an uncomplemented copy of $\ell_2$ such that $Z_2/\ell_2=\ell_2$. Recall that a short exact sequence of Banach spaces is a diagram \begin{equation}\label{SEX}
\begin{CD}
0  @>>>  Y @>i>> X @>q>> Z @>>>0
\end{CD}
\end{equation}
where $Y$, $X$ and $Z$ are Banach spaces and the arrows are operators in such a way that the kernel of each
arrow coincides with the image of the preceding one. By the open mapping theorem $i$ embeds $Y$ as a closed subspace of $X$ and $Z$ is
isomorphic to the quotient $X/i(Y)$.  Two exact sequences $0\to Y \to X \to Z
\to 0$ and $0\to Y \to X_1 \to Z \to 0$ are said to be
\emph{equivalent} if there exists an operator $T:X\longrightarrow
X_{1}$ making commutative the diagram
$$
\begin{CD}
0 @>>>Y@>>>X@>>>Z@>>>0\\
&&\| &&@VTVV \|\\
0 @>>>Y@>>>X_{1}@>>>Z@>>>0.\end{CD}$$ The sequence (\ref{SEX}) is said to be trivial (or that it splits) if $i(Y)$ is complemented in $X$ (i.e., if it is equivalent to the sequence $0\to Y\to Y\oplus Z\to Z\to 0$). We write $\Ext(Z,Y)=0$ to indicate that every sequence of the form (\ref{SEX}) splits. Summing all up, the general problem is to determine when there exist nontrivial twisted sums of $Y$ and $Z$, or else, when $\Ext(Z,Y) \neq 0$.

We pass now to consider the specific case in which both $Y$ and $Z$ are spaces of continuous functions on compact spaces.
The twisting of separable $C(K)$-spaces was treated and, to a
large extent, solved in \cite{ccky}. Nevertheless, the problem of
constructing nontrivial twisted sums of large $C(K)$-spaces is
mostly unsolved. The following problem was posed and considered in
\cite{ccky,ccy,cgpy}:  Is it true that for every non-metrizable
compact $K$ one has $\Ext(C(K),c_0) \neq 0$? In this paper we
prove that  $\Ext(C(K), C(S))\neq 0$ for any two compact spaces $K,
S$ with $\omega$-derived set empty and $K$ nonmetrizable which, in
particular, covers the case above for $C(S)=c_0$.

\section{Main result, and its consequences}

In what follows, the cardinal of a set $\Gamma$ will be denoted $|\Gamma|$. We will write $c_0(\Gamma)$ or $c_0(|\Gamma|)$ depending if one needs to stress the set or only the cardinal.

We begin with a couple of observations: First of all, that if $\Gamma$ is an uncountable set then $\Ext(c_0(\Gamma), c_0)\neq 0$. To explicitly construct an example it is enough to assume that $|\Gamma|=\aleph_1$, pick an isomorphic embedding $\varphi: c_0(\aleph_1)\to \ell_\infty/c_0$ and consider the commutative diagram
$$\begin{CD}
 0 @>>> c_0 @>>> \ell_\infty @>p>> \ell_\infty/c_0 @>>> 0\\
&& @| @AAA @AA{\varphi}A\\
 0 @>>> c_0 @>>> p^{-1}\left (\varphi(c_0(\aleph_1))\right)  @>p_{\varphi}>>c_0(\aleph_1) @>>> 0
\end{CD}
$$where $p_\varphi(x) = \varphi^{-1} p(x)$. Indeed, to see that the lower sequence does not split, one only has to observe that in any commutative diagram
\begin{equation}\label{isom}\begin{CD}
 0 @>>> c_0 @>>> \ell_\infty @>>> \ell_\infty/c_0 @>>> 0\\
&& @| @AA{\varphi'}A @AA{\varphi}A\\
 0 @>>> c_0 @>>> X  @>q>>c_0(\Gamma) @>>> 0.
\end{CD}\end{equation} in which $\varphi$ is an isomorphic embedding then also $\varphi'$ is an isomorphic embedding, which implies that the quotient map $q$ cannot be an isomorphism onto any non
separable subspace of $c_0(\Gamma)$ since non-separable $c_0(\Gamma')$ spaces are not subspaces of
$\ell_\infty$. The lower sequence in diagram (\ref{isom}) can however split when $\varphi$ is just an operator. Moreover, the results in \cite{castmoresing} imply that in any exact
sequence
$$\begin{CD} 0@>>> c_0 @>>> X@>q>> c_0(J) @>>>0\end{CD}$$
in which $|J|$ large enough the operator $q$ becomes an isomorphism on copies of
$c_0(I)$ for large $I$; while, on the other hand, in an exact sequence
$$\begin{CD} 0@>>> Y @>>> X@>q>> c_0(J) @>>>0,\end{CD}$$
in which $\Ext(X, c_0)=0$ then $q$ cannot be an isomorphism onto any
non-separable subspace of $c_0(J)$: otherwise, if $q$ becomes an
isomorphism on some subspace $c_0(J')$, this must be complemented
in $c_0(J)$ by \cite{grane}, and therefore $q^{-1}(c_0(J'))$ must be complemented in $X$, which prevents
$\Ext(X,c_0)=0$.  It is still open to decide whether every twisted sum of two $c_0(\Gamma)$ must be a $C(K)$-space
(see \cite{castsimo} for related information). We prove two preparatory lemmata of independent interest. The first one is a reformulation of \cite[Lemma 1]{lindrose}. Recall that given any exact sequence $0\to A\to B\to B/A\to 0$ and an operator $\phi: A\to Y$ there exists a superspace $X$ of $Y$ such that $X/Y=B/A$ (see \cite[Prop. 1.3.a]{castgonz}), thus forming a commutative diagram
\begin{equation}\label{pushing}
\begin{CD}
0 @>>> A @>>> B @>>> B/A@>>> 0\\
&&@V{\phi}VV @VVV @|\\
0 @>>> Y @>>> X @>>> B/A@>>> 0.
\end{CD}
\end{equation}

The following lemma shows that the converse is somehow true:

\begin{lema} Let $0\to A \to B\to C \to 0$ be an exact sequence and let $E$ be a Banach space. If $\Ext(B,E)=0$ then
for every exact sequence  $0\to E \to X\to C \to 0$ there is an operator $\phi: A\to E$ so that there is a commutative diagram
\begin{equation}
\begin{CD}
0 @>>> A @>>> B @>>> C@>>> 0\\
&&@V{\phi}VV @VVV @|\\
0 @>>> E @>>> X @>>> C@>>> 0.
\end{CD}
\end{equation}
\end{lema}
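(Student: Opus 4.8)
The plan is to realize the lower sequence as a pushout of the top one, which is most directly achieved via a pullback argument combined with the hypothesis $\Ext(B,E)=0$. First I would label the maps, writing the two given sequences as $0\to A\xrightarrow{j} B\xrightarrow{q} C\to 0$ and $0\to E\xrightarrow{i} X\xrightarrow{p} C\to 0$. The heart of the matter is to produce a bounded operator $\sigma\colon B\to X$ lifting the identity on $C$, i.e.\ with $p\sigma=q$; once this is in hand, the operator $\phi$ will essentially define itself.

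To obtain such a $\sigma$, I would form the pullback of $q$ and $p$, namely $\Pi=\{(b,x)\in B\times X : q(b)=p(x)\}$. The projection onto the first coordinate $\Pi\to B$ is surjective (using surjectivity of $p$) and has kernel $\{(0,x):p(x)=0\}=\{0\}\times i(E)\cong E$, so it yields a short exact sequence $0\to E\to \Pi\to B\to 0$. This extension represents a class in $\Ext(B,E)$, which vanishes by hypothesis; hence the sequence splits.

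A splitting provides a bounded linear section $s\colon B\to\Pi$ of the first projection. Writing $s(b)=(b,\sigma(b))$ defines the operator $\sigma\colon B\to X$, and membership of $s(b)$ in $\Pi$ forces $p\sigma=q$, so the right-hand square of the desired diagram commutes with $\sigma$ as middle vertical map. It then remains only to construct $\phi$. For $a\in A$ one has $p(\sigma(j(a)))=q(j(a))=0$, so $\sigma(j(a))$ lies in $i(E)$; since $i$ is an isomorphic embedding, $\phi:=i^{-1}\sigma j$ is a well-defined bounded operator $A\to E$ satisfying $i\phi=\sigma j$, which is precisely commutativity of the left-hand square.

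The one genuinely substantive step is the use of $\Ext(B,E)=0$ to split the pullback extension $0\to E\to\Pi\to B\to 0$; everything else is formal diagram-chasing with bounded operators, the boundedness of $\phi$ resting only on the open mapping theorem (which makes $i^{-1}$ bounded on its closed range). I therefore expect no real obstacle, the crux being merely to recognize the pullback extension as the object to which the hypothesis should be applied.
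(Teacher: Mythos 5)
Your proof is correct, but it takes a more hands-on route than the paper. The paper disposes of the lemma in one line by invoking the long homology sequence of \cite{cabecastlong}: exactness of $\mathcal L(A,E)\xrightarrow{\omega_E}\Ext(C,E)\to\Ext(B,E)$ means that $\Ext(B,E)=0$ forces $\omega_E$ to be surjective, and $\omega_E$ is precisely the map sending $\phi$ to the pushout sequence. What you do instead is prove the relevant instance of that exactness directly: you form the pullback $\Pi=\{(b,x):q(b)=p(x)\}$, observe that the first projection gives an extension $0\to E\to\Pi\to B\to 0$ which splits by hypothesis, extract from the splitting a lifting $\sigma\colon B\to X$ with $p\sigma=q$, and then read off $\phi=i^{-1}\sigma j$ by diagram chasing, with boundedness of $\phi$ coming from the open mapping theorem. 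All the steps check out — the surjectivity of $\Pi\to B$ from that of $p$, the identification of its kernel with $i(E)$, and the commutativity of both squares. The trade-off is the usual one: the paper's argument is shorter but leans on cited machinery, while yours is self-contained and makes visible exactly where the hypothesis $\Ext(B,E)=0$ enters (to split the pullback extension), which is arguably more illuminating for a reader who does not have the homology sequence at hand.
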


This result can be seen as a consequence of the homology sequence (see \cite{cabecastlong}), whose part relevant for us is the existence of an exact sequence:
$$
\begin{CD}
\dots @>>> \mathcal L (A, E) @>{\omega_E}>> \Ext(C, E)@>>> \Ext(B, E) @>>> \cdots.
\end{CD}
$$
Thus, if $\Ext(B, E)=0$ then the map $\omega_E: \mathcal L (A, E) \to \Ext(C, E)$ is surjective. This map $\omega_E$ sends operators $\phi: A\to E$ into the lower sequence in diagram (\ref{pushing}).

\begin{lema}\label{dualcardinal} Let \begin{equation}\label{twisted}\begin{CD}
 0 @>>> Y  @>>> X  @>>>c_0(\mathfrak c) @>>> 0
\end{CD}
\end{equation} be an exact sequence. If $|Y^*| \leq \mathfrak c$ then $\Ext(X, c_0)\neq 0$.
\end{lema}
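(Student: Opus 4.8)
The plan is to argue by contradiction through a cardinality count on the connecting map of the homology sequence. Assume $\Ext(X,c_0)=0$. Feeding the given sequence \eqref{twisted} into the homology sequence displayed just after the previous lemma, with $A=Y$, $B=X$, $C=c_0(\mathfrak c)$ and $E=c_0$, the relevant portion reads
$$
\mathcal L(Y,c_0)\stackrel{\omega_{c_0}}{\longrightarrow}\Ext(c_0(\mathfrak c),c_0)\longrightarrow \Ext(X,c_0).
$$
Since the last term vanishes by assumption, $\omega_{c_0}$ is surjective; in the language of the previous lemma this says that \emph{every} extension of $c_0$ by $c_0(\mathfrak c)$ arises as a pushout of the fixed sequence \eqref{twisted} along some operator $\phi\colon Y\to c_0$. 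It therefore suffices to contradict this surjectivity by comparing the sizes of domain and codomain of $\omega_{c_0}$.

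First I would bound the domain. An operator $\phi\colon Y\to c_0$ has the form $\phi(y)=(f_n(y))_n$ for a bounded weak$^*$-null sequence $(f_n)\subset Y^*$, so $\phi\mapsto(f_n)_n$ injects $\mathcal L(Y,c_0)$ into $(Y^*)^{\omega}$ and, using the hypothesis $|Y^*|\le\mathfrak c$,
$$
|\mathcal L(Y,c_0)|\le |Y^*|^{\aleph_0}\le \mathfrak c^{\aleph_0}=\mathfrak c.
$$

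The heart of the matter, and the step I expect to be the main obstacle, is the reverse estimate $|\Ext(c_0(\mathfrak c),c_0)|>\mathfrak c$: granting it, a surjection $\omega_{c_0}$ out of a set of cardinality at most $\mathfrak c$ is impossible, and the contradiction forces $\Ext(X,c_0)\neq0$. To get a grip on $\Ext(c_0(\mathfrak c),c_0)$ I would exploit the injectivity of $\ell_\infty$: running the homology sequence in the second variable on $0\to c_0\to\ell_\infty\to\ell_\infty/c_0\to0$ with first variable $c_0(\mathfrak c)$, and using $\Ext(c_0(\mathfrak c),\ell_\infty)=0$, produces a surjection
$$
\mathcal L(c_0(\mathfrak c),\ell_\infty/c_0)\longrightarrow \Ext(c_0(\mathfrak c),c_0)
$$
whose kernel consists exactly of the operators that lift through $\ell_\infty$. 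Thus it is enough to exhibit more than $\mathfrak c$ operators $c_0(\mathfrak c)\to\ell_\infty/c_0$ that remain pairwise inequivalent modulo liftable ones. Here I would fix an almost disjoint family $\{A_\gamma:\gamma<\mathfrak c\}$ of infinite subsets of $\omega$ and the induced isomorphic embedding $\varphi(e_\gamma)=[\chi_{A_\gamma}]$, whose pullback is the prototype nontrivial extension appearing in the first observation of Section~2, and then perturb $\varphi$ across a family of $2^{\mathfrak c}$ independent patterns so as to generate $2^{\mathfrak c}$ distinct classes. The genuine difficulty lies in this independence bookkeeping: one must verify that the perturbations cannot be absorbed by operators lifting to $\ell_\infty$, so that the corresponding elements of $\Ext(c_0(\mathfrak c),c_0)$ stay distinct and outnumber $\mathfrak c$.

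Assembling the pieces, $\omega_{c_0}$ would map a set of cardinality at most $\mathfrak c$ \emph{onto} one of cardinality strictly larger than $\mathfrak c$, which is absurd. Hence the standing assumption is untenable and $\Ext(X,c_0)\neq0$, as claimed.
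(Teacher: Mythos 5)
Your argument has the same skeleton as the paper's: assume $\Ext(X,c_0)=0$, use the homology sequence to make $\omega_{c_0}\colon \mathcal L(Y,c_0)\to\Ext(c_0(\mathfrak c),c_0)$ surjective, bound $|\mathcal L(Y,c_0)|\le |Y^*|^{\aleph_0}\le\mathfrak c^{\aleph_0}=\mathfrak c$, and derive a contradiction from a cardinality mismatch. Up to that point everything you write is correct and essentially identical to the paper.

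The gap is exactly where you say you expect it: the lower bound $|\Ext(c_0(\mathfrak c),c_0)|>\mathfrak c$ is never actually established. Your reduction to a surjection $\mathcal L(c_0(\mathfrak c),\ell_\infty/c_0)\to\Ext(c_0(\mathfrak c),c_0)$ via injectivity of $\ell_\infty$ is fine, but the decisive step --- producing $2^{\mathfrak c}$ perturbations of an almost-disjoint-family embedding that remain pairwise inequivalent modulo operators lifting to $\ell_\infty$ --- is only announced, with the ``independence bookkeeping'' explicitly deferred. That bookkeeping \emph{is} the theorem here; without it the proof does not close. The paper does not attempt it either: it invokes Marciszewski and Pol \cite[7.4]{marcpol}, who prove that there are $2^{\mathfrak c}$ non-equivalent exact sequences $0\to c_0\to\spadesuit\to c_0(\mathfrak c)\to 0$. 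So your proposal is the paper's proof minus its one nontrivial external input. Note also that this input is delicate: as the paper remarks after the lemma, the Marciszewski--Pol counting depends on cardinal arithmetic (it works at $\mathfrak c$ because $\mathfrak c<2^{\mathfrak c}$, but the analogous question at $\aleph_1$ is undecided if $2^{\aleph_1}=\mathfrak c$), so one should not expect the ``perturbation'' step to be routine. To complete your write-up, either cite \cite[7.4]{marcpol} or supply the full combinatorial construction.
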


\begin{proof} Applying the homology sequence as above to sequence (\ref{twisted}) with target $E=c_0$  one  gets that the map
$$\begin{CD}\omega_Y: \mathcal L(Y, c_0) @>>> \Ext(c_0(\mathfrak c), c_0)\end{CD}$$
is surjective when $\Ext(X, c_0)=0$, from where $|\Ext(c_0(\mathfrak c), c_0)|  \leq |\mathcal L (Y, c_0)|$. Let us show that, under the assumption  $|Y^*| \leq \mathfrak c$, this cannot be so: On one hand one has
$|\mathcal L(Y, c_0)|\leq |\mathcal L(\ell_1, Y^*)|$, which is the set of bounded sequences of
the unit ball of $Y^*$. Since there are $|(Y^*)^\N|$ countable
subsets of $Y^*$, each of them admitting $\mathfrak c$ bounded
sequences one gets, $$|\mathfrak L(Y, c_0)| \leq  |\R \times (|Y^*|^{\aleph_0})^{\aleph_0}| = \mathfrak c^{\aleph_0} = \mathfrak c.$$ On the other hand, Marciszewski and Pol show in \cite[7.4]{marcpol}
that there exist $2^{\mathfrak c}$ non-equivalent exact sequences $0\to c_0\to \spadesuit \to c_0(\mathfrak c)\to 0$; i.e., $|\Ext(c_0(\mathfrak c), c_0)|\geq 2^{\mathfrak c}$.\end{proof}

The reason behind the involved proof to come is that the straightforward proof one would obtain from the argument above does not holds in general since the inequality $\aleph^{\aleph_0} < 2^\aleph$ does not
necessarily hold for all cardinals. Indeed, see \cite[Thm. 5.15]{jech}, under {\sf GCH}, it occurs that when $\aleph$ has cofinality greater than $\aleph_0$ then $\aleph^{\aleph_0} = \aleph$; but for $\aleph$ having cofinality $\aleph_0$ then $\aleph^{\aleph_0} = 2^\aleph$.\\

Moreover, the Continuum Hipothesis, in what follows [{\sf CH}], is necessary to make $\mathfrak c$ the first case to consider. Indeed, the argument of Marciscewski-Pol could not work even for $\aleph_1$ outside {\sf CH} since it depends on cardinal arithmetics: for $\mathfrak c < 2^{\aleph_1}$ the same proof in \cite{marcpol}  yields that there are $2^{\aleph_1}$ different exact sequences $0\to c_0 \to X \to c_0(\aleph_1) \to 0$. However, if $2^{\aleph_1} = \mathfrak c$ then the method in \cite{marcpol} does not decide. We are indebted to W. Marciszewki who informed us from these facts. \\

Lemma \ref{dualcardinal} can be improved to:

\begin{lema}\label{retwisted} Let $\;0 \to  Y \to  X \to  Z \to  0$ be  an exact sequence. Let $B$ be a Banach space for which $|\mathcal L(Y, B)| < |\Ext(Z,B)|$.  Then $|\Ext(X, B)|\geq |\Ext(Z, B)|$.
\end{lema}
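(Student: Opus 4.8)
The plan is to use the same homology sequence machinery that proved Lemma \ref{dualcardinal}, but now as a purely set-theoretic counting argument, stripping away the specific cardinals $\mathfrak{c}$ and $2^{\mathfrak c}$ and replacing them with the abstract hypothesis $|\mathcal{L}(Y,B)| < |\Ext(Z,B)|$. First I would invoke the homology long exact sequence with target $B$ applied to the sequence $0\to Y\to X\to Z\to 0$, whose relevant segment is
\begin{equation*}
\begin{CD}
\mathcal L(Y, B) @>{\omega_B}>> \Ext(Z, B)@>>> \Ext(X, B).
\end{CD}
\end{equation*}
Exactness at $\Ext(Z,B)$ means the image of $\omega_B$ coincides with the kernel of the connecting map $\Ext(Z,B)\to\Ext(X,B)$.

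The key step is then a cardinality bookkeeping on this exact sequence. Since $\operatorname{im}\omega_B = \ker(\Ext(Z,B)\to\Ext(X,B))$, the induced injection $\Ext(Z,B)/\ker \hookrightarrow \Ext(X,B)$ gives $|\Ext(X,B)| \geq |\Ext(Z,B)/\operatorname{im}\omega_B|$, so it suffices to bound the cardinality of the image. Because $\omega_B$ is a (linear, hence group) homomorphism with domain $\mathcal{L}(Y,B)$, one has $|\operatorname{im}\omega_B| \leq |\mathcal{L}(Y,B)|$. The hypothesis $|\mathcal{L}(Y,B)| < |\Ext(Z,B)|$ therefore forces the quotient $\Ext(Z,B)/\operatorname{im}\omega_B$ to have cardinality exactly $|\Ext(Z,B)|$: a group (here, a vector space) of size $\kappa$ cannot be covered by a subgroup of size strictly less than $\kappa$ together with fewer-than-$\kappa$ cosets without the quotient still having size $\kappa$. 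Concretely, if $|\Ext(Z,B)/\operatorname{im}\omega_B| = \lambda < |\Ext(Z,B)|$, then $|\Ext(Z,B)| \leq |\operatorname{im}\omega_B| \cdot \lambda \leq |\mathcal{L}(Y,B)| \cdot \lambda < |\Ext(Z,B)|$, a contradiction, provided both factors are strictly below $|\Ext(Z,B)|$.

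The main obstacle I anticipate is the infinite-cardinal arithmetic in that last inequality: the step $|\mathcal{L}(Y,B)|\cdot\lambda < |\Ext(Z,B)|$ requires that the product of two cardinals each strictly smaller than $|\Ext(Z,B)|$ stays strictly below it, which for infinite cardinals reduces to the maximum of the two factors and is genuinely valid (the product of two cardinals equals their maximum when one is infinite). One must handle the degenerate possibility that $\operatorname{im}\omega_B$ is trivial or that $\Ext(Z,B)$ is finite, but the hypothesis $|\mathcal L(Y,B)| < |\Ext(Z,B)|$ already guarantees $\Ext(Z,B)$ is infinite (since $\mathcal L(Y,B)$ always contains the zero operator, hence is nonempty, and in fact is a vector space, so it is either a singleton or infinite). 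With these cardinal identities in place the quotient has size $|\Ext(Z,B)|$ and the injection into $\Ext(X,B)$ delivers the conclusion $|\Ext(X,B)|\geq |\Ext(Z,B)|$.
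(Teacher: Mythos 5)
Your proof is correct and takes essentially the same route as the paper: the exact segment $\mathcal L(Y,B)\stackrel{\omega_B}{\to}\Ext(Z,B)\stackrel{\gamma}{\to}\Ext(X,B)$, the factorization $\Ext(Z,B)/\ker\gamma\cong\mathrm{Im}\,\gamma$ giving $|\Ext(X,B)|\geq|\Ext(Z,B)/\mathrm{Im}\,\omega_B|$, and the observation that quotienting an infinite group by a subgroup of strictly smaller cardinality preserves the cardinality. The only difference is that you spell out the cardinal-arithmetic justification ($|G|=|H|\cdot|G/H|$ plus the product-equals-maximum rule) for the final equality, which the paper simply asserts.
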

\begin{proof} It follows from  the exactness of the sequence
$$\begin{CD}\mathcal L(Y, B) @>{\omega_{B}}>> \Ext (Z, B)  @>\gamma>> \Ext(X, B) \end{CD}$$
and the standard factorization $\Ext (Z, B)/  \ker  \gamma = \mathrm{Im} \gamma$  that $|\Ext(X, B)|\geq | \mathrm{Im}\; \gamma| = |\Ext (Z, B)/ \; \ker \gamma| = |\Ext (Z, B)/ \; \mathrm{Im}\; \omega_{B}|$. On the other hand one has
$|\mathrm{Im}\; \omega_{B} | \leq |\mathcal L(Y, B)| < |\Ext (Z, B)| $, and thus $|\Ext (Z, B)/ \; \mathrm{Im}\; \omega_{B}| = |\Ext(Z,B)|$.\end{proof}

To continue with the preparation for the proof we need now a result that can be considered folklore but which, to the best of our knowledge, cannot be found in the literature:

\begin{lema}\label{dens} Let $Y$ be a subspace of $X$. There is a subspace $Y^\bullet\subset Y$ with density character $\mathrm{dens} \;Y^\bullet \leq \mathrm{dens} \; X/Y$ and a subspace $X^\bullet\subset X$ such that $X^\bullet/ Y^\bullet = X/Y$.\end{lema}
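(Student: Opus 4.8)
The plan is to manufacture $X^\bullet$ as the closed linear span of a well-chosen set of $\mathrm{dens}\,(X/Y)$ vectors, arranged so that the quotient map $q\colon X\to X/Y$ restricted to $X^\bullet$ is onto and, moreover, computes the $X/Y$-norm using only vectors of $X^\bullet\cap Y$. Writing $\kappa=\mathrm{dens}\,(X/Y)$, recall that by the open mapping theorem $q$ is open, so every $z\in X/Y$ admits a lifting $x\in X$ with $q(x)=z$ and $\|x\|\le 2\|z\|$. I would then set $Y^\bullet=X^\bullet\cap Y$, which is a subspace of $Y$, and the bound $\mathrm{dens}\,Y^\bullet\le\mathrm{dens}\,X^\bullet\le\kappa$ will be immediate once $X^\bullet$ is built out of $\kappa$ generators, since the closed span of $\kappa$ vectors has density character at most $\kappa$.

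First I would secure surjectivity. Fix a dense set $D=\{d_i\}\subset X/Y$ with $|D|\le\kappa$ and a lifting $x_{i}$ of each $d_i$ with $\|x_{i}\|\le 2\|d_i\|$. Let $X_0$ be the closed span of $\{x_i\}$, so $\mathrm{dens}\,X_0\le\kappa$. A geometric successive-approximation argument shows $q(X_0)=X/Y$ exactly, not merely densely: given $z$, choose $d_{i_1}$ with $\|z-d_{i_1}\|\le\tfrac12\|z\|$, then $d_{i_2}$ approximating $z-d_{i_1}$ to within $\tfrac12$ of its norm, and so on; the resulting geometric control of the norms $\|x_{i_k}\|$ makes $\sum_k x_{i_k}$ converge in $X_0$ to a lifting of $z$. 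Already this yields the isomorphic version of the statement, since with $Y^\bullet=X_0\cap Y$ the open mapping theorem turns the continuous bijection $X_0/Y^\bullet\to X/Y$ induced by $q$ into an isomorphism.

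To obtain the isometric identity $X^\bullet/Y^\bullet=X/Y$ I would run a closing-off over $\omega$ steps. Starting from $X_0$, at stage $n$ pick a dense subset $D_n\subset X_n^\bullet$ of size $\le\kappa$ and, for each $x\in D_n$ and each $m\in\N$, adjoin to $X_n^\bullet$ a near-optimal approximant $w_{x,m}\in Y$ with $\|x-w_{x,m}\|\le\|q(x)\|+\tfrac1m$; let $X_{n+1}^\bullet$ be the closed span of the enlarged set and $X^\bullet=\overline{\bigcup_n X_n^\bullet}$. Each stage adjoins at most $\kappa\cdot\aleph_0=\kappa$ vectors, so the bound $\mathrm{dens}\,X^\bullet\le\kappa$ survives the $\omega$ iterations. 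A routine $3\varepsilon$ estimate then shows that for every $x\in X^\bullet$ one has $\inf_{w\in Y^\bullet}\|x-w\|=\|q(x)\|_{X/Y}$, where $Y^\bullet=X^\bullet\cap Y$; together with surjectivity inherited from $X_0\subset X^\bullet$, this makes the map $X^\bullet/Y^\bullet\to X/Y$ induced by $q$ an onto isometry.

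The step I expect to cause the only genuine difficulty is the bookkeeping in the closing-off: enlarging $X^\bullet$ to recover the quotient norm produces fresh vectors that themselves require approximants, so surjectivity and norm-matching must be interleaved and stabilised in the limit, all while the cardinal count $\kappa$ is held fixed across the $\omega$ stages. Everything else---the liftings furnished by openness, the convergence of the approximating series, and the final $3\varepsilon$ verification---is routine.
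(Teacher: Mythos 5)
Your proof is correct, and up to the construction of $X_0$ it coincides with the paper's: both take liftings $x_\gamma$ of a dense family in $X/Y$ with controlled norms and obtain surjectivity of $q$ on the closed span by successive approximation. The divergence is in how $Y^\bullet$ is produced. The paper fixes a linear (not necessarily continuous) selection $L$ for $q$, sets $Y^\bullet=[x_\gamma-Lz_\gamma]$ and $X^\bullet=[Y^\bullet+[x_\gamma]]$, and identifies $\ker q|_{X^\bullet}$ with $Y^\bullet$ by an algebraic computation on finite linear combinations; you instead take $Y^\bullet=X^\bullet\cap Y$, which makes the kernel identification tautological, and then spend an $\omega$-step closing-off to force the induced bijection $X^\bullet/Y^\bullet\to X/Y$ to be an isometry. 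That last stage is more than the lemma asks for: the equality $X^\bullet/Y^\bullet=X/Y$ is used in the paper only to build a commutative diagram with exact rows, so the isomorphism already delivered by your second paragraph (continuous bijection plus open mapping theorem) suffices --- and indeed the paper's own construction yields only an isomorphism, not an isometry. Thus the ``only genuine difficulty'' you flag, namely interleaving surjectivity with norm-matching across the $\omega$ stages while holding the cardinal fixed, is a difficulty of your own making; you handle it correctly, but you could stop before it. On one point your route is arguably safer: the paper's verification that $\ker q|_{X^\bullet}$ equals the closed span $[x_\gamma-Lz_\gamma]$ is carried out only for finite combinations in the dense linear span, whereas your choice $Y^\bullet=X^\bullet\cap Y$ sidesteps that passage to the closure entirely.
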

\begin{proof} Let $(z_\gamma)_\gamma$ be a dense subset of the unit ball of $X/Y$ having minimal size. By the open mapping theorem there is some constant $C>0$ such that one can pick  for each $z_\gamma$ an element $x_\gamma\in X$ with norm $\|x_\gamma\|\leq C\|z_\gamma\|$ so that $x_\gamma +Y = z_\gamma$. Let $L: X/Y\to X$ be a linear (not necessarily continuous) selection for the quotient map $q:X\to X/Y$. Thus $x_\gamma - Lz_\gamma \in Y$ and the closed subspace they span $Y^\bullet = [x_\gamma - Lz_\gamma]$ has density character smaller or equal than that of $Z$. Let $X^\bullet= [Y^\bullet + [x_\gamma]]$ be the closed subspace of $X$ spanned by $Y^\bullet$ and all the $x_\gamma$. It is clear that $q: X^\bullet \to X/Y$ is onto since the points $x_\gamma + Y =z_\gamma$ are in the image of the ball of radius $C$ of $X^\bullet$. Moreover, $\ker q_{|X^\bullet} = Y^\bullet$: indeed, since $X^\bullet = [Y^\bullet + [x_\gamma]] = [x_\gamma - Lz_\gamma, x_\gamma] = [x_\gamma, Lz_\gamma]$,
pick $x\in X^\bullet$ of the form $x=\sum \lambda_\gamma x_\gamma + \sum \lambda_\mu Lz_\mu$ so that $qx=0$; which means
$\sum \lambda_\gamma z_\gamma = - \sum \lambda_\mu z_\mu$. Thus

$$x=\sum \lambda_\gamma x_\gamma + \sum \lambda_\mu Lz_\mu =\sum \lambda_\gamma x_\gamma + L\left(\sum \lambda_\mu z_\mu\right) = \sum \lambda_\gamma x_\gamma - L\left( \sum \lambda_\gamma z_\gamma \right) = \sum \lambda_\gamma (x_\gamma -Lz_\gamma)$$
and thus $x\in Y^\bullet$.\end{proof}

Lemma \ref{dens} can be reformulated as: given an exact sequence $0\
\to Y\to X\to Z\to 0$ there are subspaces $Y^\bullet\subset Y$ and $X^\bullet\subset X$ with $\mathrm{dens} \;Y^\bullet \leq \mathrm{dens}\; Z$ that form a commutative diagram
$$\begin{CD}
0@>>>Y^\bullet@>>>X^\bullet@>>>Z@>>>0\\
&&@VVV @VVV @| \\
0@>>>Y @>>>X @>>>Z@>>>0.
\end{CD}$$

In particular, given an exact sequence $ 0 \to c_0(\Gamma) \to  X \to c_0(\mathfrak c) \to 0 $, since for every subspace $E\subset c_0(\Gamma)$ with $\mathrm{dens}\; E\leq \aleph \leq
|\Gamma|$ there is a copy of $c_0(\aleph)$ for which $E \subset
c_0(\aleph)\subset c_0(\Gamma)$, it is clear that there is a commutative diagram
$$
\begin{CD}
0 @>>> c_0(\mathfrak c)  @>>> X^\bullet @>>> c_0(\mathfrak c) @>>>0\\
&&@ViVV @VV{i^\bullet}V @|\\
0 @>>> c_0(\Gamma)  @>>> X @>>> c_0(\mathfrak c) @>>>0,
\end{CD}
$$where $i, i^\bullet$ are the canonical inclusions. We are ready for the action:\\

\noindent \textbf{Basic case: $|J|=\mathfrak c$:}
\begin{lema}\label{basic} Given an exact sequence $0\to c_0(\Gamma) \to X\to c_0(\mathfrak c) \to 0$ then $\Ext(X, c_0)\neq 0$.\end{lema}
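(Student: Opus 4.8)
The plan is to reduce the given sequence to one whose kernel is $c_0(\mathfrak c)$, apply Lemma \ref{dualcardinal} there, and then transport the conclusion back to $X$ by exhibiting the relevant subspace as a complemented one. The guiding observation is the cardinal estimate $|c_0(\mathfrak c)^*| = |\ell_1(\mathfrak c)| = \mathfrak c$: every element of $\ell_1(\mathfrak c)$ has countable support, there are $\mathfrak c^{\aleph_0} = \mathfrak c$ countable subsets of $\mathfrak c$, and each carries $\mathfrak c$ summable sequences, so $|\ell_1(\mathfrak c)| = \mathfrak c \cdot \mathfrak c = \mathfrak c$. The same computation gives $|c_0(\Gamma)^*| = |\ell_1(\Gamma)| \leq \mathfrak c$ whenever $|\Gamma| \leq \mathfrak c$.

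First I would dispose of the easy range. If $|\Gamma| \leq \mathfrak c$, then $|c_0(\Gamma)^*| \leq \mathfrak c$ and Lemma \ref{dualcardinal}, applied verbatim to the given sequence $0 \to c_0(\Gamma) \to X \to c_0(\mathfrak c) \to 0$, already yields $\Ext(X, c_0) \neq 0$. Assume now $|\Gamma| > \mathfrak c$. Writing $\Gamma = \mathfrak c \sqcup \Gamma'$ we have the canonical splitting $c_0(\Gamma) = c_0(\mathfrak c) \oplus c_0(\Gamma')$, and the commutative diagram displayed just before the statement provides a subspace $X^\bullet \subset X$ sitting in an exact sequence $0 \to c_0(\mathfrak c) \to X^\bullet \to c_0(\mathfrak c) \to 0$, with $X^\bullet \cap c_0(\Gamma) = c_0(\mathfrak c)$ and $q(X^\bullet) = c_0(\mathfrak c)$, where $q \colon X \to c_0(\mathfrak c)$ is the quotient map. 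Since $|c_0(\mathfrak c)^*| = \mathfrak c$, Lemma \ref{dualcardinal} gives $\Ext(X^\bullet, c_0) \neq 0$.

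The heart of the matter, and the step I expect to be the main obstacle, is to pass from $\Ext(X^\bullet, c_0) \neq 0$ back to $\Ext(X, c_0) \neq 0$; this does not follow from a mere inclusion $X^\bullet \subset X$, so I would prove that $X^\bullet$ is in fact complemented in $X$, with complement $c_0(\Gamma')$. Indeed, $X^\bullet \cap c_0(\Gamma') \subset (X^\bullet \cap c_0(\Gamma)) \cap c_0(\Gamma') = c_0(\mathfrak c) \cap c_0(\Gamma') = 0$; and for arbitrary $x \in X$ one uses the surjectivity of $q|_{X^\bullet}$ to pick $x^\bullet \in X^\bullet$ with $q(x^\bullet) = q(x)$, so that $x - x^\bullet \in c_0(\Gamma) = c_0(\mathfrak c) \oplus c_0(\Gamma')$; splitting $x - x^\bullet = a + b$ with $a \in c_0(\mathfrak c) \subset X^\bullet$ and $b \in c_0(\Gamma')$ gives $x = (x^\bullet + a) + b \in X^\bullet + c_0(\Gamma')$. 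Hence $X = X^\bullet \oplus c_0(\Gamma')$.

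Finally, since $X^\bullet$ is complemented in $X$, the conclusion transfers by functoriality of $\Ext(-, c_0)$. Writing $\iota \colon X^\bullet \to X$ for the inclusion and $\rho \colon X \to X^\bullet$ for a bounded retraction with $\rho \iota = \mathrm{id}$, the induced maps satisfy $\iota^* \rho^* = (\rho\iota)^* = \mathrm{id}$ on $\Ext(X^\bullet, c_0)$, so $\rho^*$ is injective and $\Ext(X^\bullet, c_0) \neq 0$ forces $\Ext(X, c_0) \neq 0$. The essential point throughout is that the reduction to a kernel of the form $c_0(\mathfrak c)$ is exactly what brings the cardinality hypothesis of Lemma \ref{dualcardinal} within reach, while the complementation is what licenses the transfer back; without the latter, the bare inclusion $X^\bullet \subset X$ would be insufficient.
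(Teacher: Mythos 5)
Your proof is correct, but the way you transport the obstruction back to $X$ differs from the paper's. Both arguments start from the same place: the diagram preceding the statement, which produces a subspace $X^\bullet\subset X$ fitting in $0\to c_0(\mathfrak c)\to X^\bullet\to c_0(\mathfrak c)\to 0$, and both ultimately rest on the cardinality gap $|\mathcal L(c_0(\mathfrak c),c_0)|=\mathfrak c<2^{\mathfrak c}\leq|\Ext(c_0(\mathfrak c),c_0)|$. The paper, however, never leaves the original sequence: it assumes $\Ext(X,c_0)=0$, notes that the connecting map $\omega_{c_0}:\mathcal L(c_0(\Gamma),c_0)\to\Ext(c_0(\mathfrak c),c_0)$ is then surjective, and uses the commutative diagram to factor it through the restriction $\phi\mapsto\phi i$, obtaining a surjection $\mathcal L(c_0(\mathfrak c),c_0)\to\Ext(c_0(\mathfrak c),c_0)$ and an immediate cardinality contradiction --- no complementation of $X^\bullet$ is needed, and no case split on $|\Gamma|$. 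You instead apply Lemma \ref{dualcardinal} to the reduced sequence to get $\Ext(X^\bullet,c_0)\neq 0$ and then prove the structural fact $X=X^\bullet\oplus c_0(\Gamma')$ to pull the conclusion up by functoriality; your verification of that decomposition is sound (trivial intersection, algebraic sum equal to $X$, and the topological direct sum then comes for free from the open mapping theorem since both summands are closed), and you are right that a bare inclusion would not suffice. What your route buys is modularity --- Lemma \ref{dualcardinal} is quoted as a black box and the complementation $X=X^\bullet\oplus c_0(\Gamma')$ is a reusable observation --- at the cost of an extra step and a case distinction; the paper's route is shorter and is the one that generalizes more smoothly to the later reductions (Lemma \ref{reduction} and Proposition \ref{induction}), where the relevant subspace is no longer obviously complemented.
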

\begin{proof}  If $\Ext(X, c_0)=0$ then  the map
$\omega_{c_0}: \mathcal L(c_0(\Gamma), c_0)\to \Ext(c_0(\mathfrak c), c_0)$ is surjective and thus, via composition with $i$, there is also a surjective map $\mathcal L(c_0(\mathfrak c), c_0)\to \Ext(c_0(\mathfrak c), c_0)$, which is impossible.\end{proof}

\noindent \textbf{Reduction to the case $|J|=\mathfrak c$:}
\begin{lema}\label{reduction} Under {\sf CH}, given an exact sequence $0\to c_0(\Gamma) \to X\to c_0(J) \to 0$ then $\Ext(X, c_0)\neq 0$ when $X$ is nonseparable.\end{lema}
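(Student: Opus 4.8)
The plan is to reduce the general case $0\to c_0(\Gamma)\to X\to c_0(J)\to 0$ (with $X$ nonseparable, under {\sf CH}) to the basic case $|J|=\mathfrak c$ already handled in Lemma~\ref{basic}. The target $c_0(J)$ is a direct sum $c_0(J)=\bigoplus_{j\in J}^{c_0}\mathbb{K}$, and since $X$ is nonseparable while $c_0(\Gamma)$ contributes whatever separable-or-not part it does, I would first argue that either $c_0(\Gamma)$ itself is nonseparable of large density, or the quotient $c_0(J)$ is nonseparable, i.e. $|J|\geq\aleph_1=\mathfrak c$ under {\sf CH}. The nontrivial situation is thus $|J|\geq\mathfrak c$. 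So the first step is to produce inside the given sequence a sub-sequence whose quotient is exactly $c_0(\mathfrak c)$: choose any subset $J_0\subseteq J$ with $|J_0|=\mathfrak c$, let $P\colon c_0(J)\to c_0(J_0)\cong c_0(\mathfrak c)$ be the canonical coordinate projection, and form the pullback of the given sequence along the inclusion $c_0(J_0)\hookrightarrow c_0(J)$ (equivalently, restrict the quotient map to $q^{-1}(c_0(J_0))$). This yields a commutative diagram
\begin{equation*}
\begin{CD}
0 @>>> c_0(\Gamma) @>>> X_0 @>>> c_0(\mathfrak c) @>>> 0\\
&& @| @VVV @VVV\\
0 @>>> c_0(\Gamma) @>>> X @>>> c_0(J) @>>> 0,
\end{CD}
\end{equation*}
where $X_0=q^{-1}(c_0(J_0))$ is a closed subspace of $X$.

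Next I would invoke Lemma~\ref{dens} in its reformulated ``dense subobject'' form to replace the kernel $c_0(\Gamma)$ by a small piece. Applied to the top sequence, it produces subspaces $Y^\bullet\subseteq c_0(\Gamma)$ and $X^\bullet\subseteq X_0$ with $\mathrm{dens}\,Y^\bullet\leq\mathrm{dens}\,c_0(\mathfrak c)=\mathfrak c$ fitting into a commutative diagram with quotient still $c_0(\mathfrak c)$. As noted in the remark following Lemma~\ref{dens}, since $Y^\bullet$ sits inside $c_0(\Gamma)$ with density at most $\mathfrak c$, it embeds into a copy of $c_0(\mathfrak c)\subseteq c_0(\Gamma)$, and one obtains a sequence
\begin{equation*}
\begin{CD}
0 @>>> c_0(\mathfrak c) @>>> X^{\bullet\bullet} @>>> c_0(\mathfrak c) @>>> 0.
\end{CD}
\end{equation*}
By Lemma~\ref{basic} this sequence has $\Ext(X^{\bullet\bullet},c_0)\neq 0$.

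The final step is to propagate nontriviality upward from $X^{\bullet\bullet}$ to $X$. This is where I expect the main obstacle to lie: having a nontrivial $\Ext$ for a \emph{subspace} does not automatically give one for the ambient space. The natural tool is Lemma~\ref{retwisted}, or rather the three-space exactness $\mathcal{L}(Y,c_0)\xrightarrow{\omega}\Ext(Z,c_0)\xrightarrow{\gamma}\Ext(X,c_0)$ that underlies it. I would apply the homology sequence directly to $0\to c_0(\Gamma)\to X\to c_0(J)\to 0$ with target $c_0$: if $\Ext(X,c_0)=0$ then $\omega\colon\mathcal{L}(c_0(\Gamma),c_0)\to\Ext(c_0(J),c_0)$ is surjective. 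Composing with the coordinate projection $c_0(J)\twoheadrightarrow c_0(\mathfrak c)$ (equivalently, using functoriality of the pullback), surjectivity of $\omega$ would force the corresponding map into $\Ext(c_0(\mathfrak c),c_0)$ to be surjective as well, factoring through $\mathcal{L}(c_0(\mathfrak c),c_0)$ exactly as in Lemma~\ref{basic}. Since $|\mathcal{L}(c_0(\mathfrak c),c_0)|=\mathfrak c$ (by the cardinality computation of Lemma~\ref{dualcardinal}, using that $|c_0(\mathfrak c)^*|=|\ell_1(\mathfrak c)|=\mathfrak c$ under {\sf CH}) while $|\Ext(c_0(\mathfrak c),c_0)|\geq 2^{\mathfrak c}$ by Marciszewski--Pol, this is impossible. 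The delicate point to verify carefully is that the surjectivity of $\omega_{c_0}$ for the large sequence genuinely transfers to the $c_0(\mathfrak c)$-quotient via the projection $J\to J_0$ so that the cardinality contradiction of the basic case applies verbatim; the role of {\sf CH} is precisely to guarantee $|c_0(J)^*|$ stays controlled and that $\mathfrak c$ is the relevant first cardinal.
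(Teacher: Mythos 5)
Your proposal follows essentially the same route as the paper: restrict the quotient to a complemented copy of $c_0(\mathfrak c)$ via pullback, shrink the kernel to $c_0(\mathfrak c)$ with Lemma \ref{dens}, and derive from the hypothetical vanishing of $\Ext(X,c_0)$ a surjection $\mathcal L(c_0(\mathfrak c),c_0)\to\Ext(c_0(\mathfrak c),c_0)$, which contradicts $|\mathcal L(c_0(\mathfrak c),c_0)|=\mathfrak c<2^{\mathfrak c}\le|\Ext(c_0(\mathfrak c),c_0)|$. The paper phrases the transfer step with an explicit diagram (extending an arbitrary sequence over $c_0(\mathfrak c)$ by $c_0(H)$ and pulling the resulting operator $\phi$ back through the reduced sequence), whereas you use surjectivity of the restriction map $\Ext(c_0(J),c_0)\to\Ext(c_0(\mathfrak c),c_0)$ coming from complementation of the summand together with naturality of the connecting map; these are the same argument in different clothing, and your version is sound. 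The one loose end is the branch $|J|\le\aleph_0$, which you label the ``trivial situation'' but never actually dispatch: there the reduction to a $c_0(\mathfrak c)$ quotient is unavailable, and the paper instead invokes separable injectivity of $c_0(\Gamma)$ (Sobczyk) to split the sequence, identifies $X$ with the nonseparable space $c_0(\Gamma)$, and quotes the introductory $\ell_\infty/c_0$ construction showing $\Ext(c_0(\Gamma),c_0)\neq 0$ for uncountable $\Gamma$; you should add that half-paragraph to close the case.
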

\begin{proof} If $|J|\leq \aleph_0$ then the sequence splits (the spaces $c_0(\Gamma)$ are separably injective, which means that they are complemented in every superspace $X$ so that $X/c_0(\Gamma)$ is separable (see, e.g., \cite{accgm1}). In which case, $X$ is isomorphic to $c_0(\Gamma)$. Being $X$ nonseparable, $\Gamma$ must be
uncountable and $\Ext(X, c_0)\neq 0$. If $|J|=\mathfrak c$, we are in the case of Lemma \ref{basic}. If $|J|>\mathfrak c$, take a set of size $\mathfrak c$ in $J$ and consider the decomposition $c_0(J)=c_0(\mathfrak c)\oplus c_0(H)$ with canonical embedding $u: c_0(\mathfrak c)\to c_0(J)$. Take an arbitrary exact sequence $0\to c_0\to A\to c_0(\mathfrak c)\to 0$ and multiply on the right by $c_0(H)$ to get the exact sequence $0\to c_0\to A\oplus c_0(H)\to c_0(\mathfrak c) \oplus c_0(H)\to 0$. Form then the commutative diagram
$$
\begin{CD}
0 @>>> c_0(\mathfrak c)  @>>> q^{-1}(u(c_0(\mathfrak c)))^\bullet @>>> c_0(\mathfrak c) @>>>0\\
&&@ViVV @VV{i^\bullet}V @|\\
0 @>>> c_0(\Gamma)  @>>> q^{-1}(u(c_0(\mathfrak c))) @>q_u>> c_0(\mathfrak c @>>>0\\
&&@| @VV{u'}V @VVuV\\
0 @>>> c_0(\Gamma)  @>>> X @>q>> c_0(J) @>>>0\\
&&@V{\phi}VV @VV{\phi'}V @|\\
0 @>>> c_0  @>>>  A\oplus c_0(H) @>>> c_0(\mathfrak c)\oplus c_0(H) @>>>0.
\end{CD}
$$
where the operator $\phi$ exist because $\Ext(X, c_0)=0 $. If $p_A:  A\oplus c_0(H)\to A$ denotes the canonical projection onto $A$ then there is a commutative diagram
$$
\begin{CD}
0 @>>> c_0(\mathfrak c)  @>>> q^{-1}(u(c_0(\mathfrak c)))^\bullet @>>> c_0(\mathfrak c) @>>>0\\
&&@V\phi iVV @VV{p_A \phi' u' i^\bullet}V @|\\
0 @>>> c_0  @>>>  A @>>> c_0(\mathfrak c) @>>>0.
\end{CD}
$$This again implies that there is a surjective map $\mathcal L(c_0(\mathfrak c), c_0)\to \Ext (c_0(\mathfrak c), c_0)$, which we have already shown it is impossible.\end{proof}

We are now ready to show:

\begin{proposition}\label{induction} Let $X_n$ be a sequence of Banach spaces in which $X_0$ is a nonseparable twisted sum of $c_0(\Gamma_0)$ and $c_0(J_0)$, and $X_{n}$ is a twisted sum of $c_0(\Gamma_{n})$ and $X_{n-1}$ for all $n\geq 1$. Under {\sf CH}, $\Ext(X_{n}, c_0)\neq 0$ for all $n\geq 0$.
\end{proposition}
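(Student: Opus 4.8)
The plan is to argue by contradiction from $\Ext(X_n,c_0)=0$, extending the mechanism of Lemma \ref{reduction} (which is exactly the case $n=0$) so that the extra layers $0\to c_0(\Gamma_k)\to X_k\to X_{k-1}\to 0$ cannot spoil nontriviality. Composing all the quotient maps gives a surjection $Q\colon X_n\to c_0(J_0)$ whose kernel $K_n$ is the $(n{+}1)$-fold iterated twisted sum of $c_0(\Gamma_0),\dots,c_0(\Gamma_n)$. Since $X_0$ is nonseparable, $\max(|\Gamma_0|,|J_0|)\ge\mathfrak c$ under {\sf CH}; treating the degenerate case $|J_0|\le\aleph_0$ (where the bottom sequence splits and $X_0\simeq c_0(\Gamma_0)$) separately, I may assume $c_0(\mathfrak c)$ sits complemented in the base, say $c_0(J_0)=c_0(\mathfrak c)\oplus c_0(H)$.

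The first step is to build inside $X_n$ a \emph{thin} subtower $X_n^\bullet$, all of whose pieces have size at most $\mathfrak c$. I proceed layer by layer. Pulling the bottom sequence back along $c_0(\mathfrak c)\hookrightarrow c_0(J_0)$ and applying Lemma \ref{dens} gives $0\to c_0(\aleph_0')\to X_0^\bullet\to c_0(\mathfrak c)\to 0$ with $X_0^\bullet\subseteq X_0$ and $\aleph_0'\le\mathfrak c$, where I enlarge the Lemma \ref{dens} kernel to a complemented $c_0(\aleph_0')\subseteq c_0(\Gamma_0)$ exactly as in the remark preceding Lemma \ref{basic}. Given $X_{k-1}^\bullet\subseteq X_{k-1}$ of density $\mathfrak c$, I pull the $k$-th sequence back over $X_{k-1}^\bullet$, shrink the kernel $c_0(\Gamma_k)$ by Lemma \ref{dens} to density $\le\mathfrak c$, and enlarge it to a complemented $c_0(\aleph_k')\subseteq c_0(\Gamma_k)$, obtaining $0\to c_0(\aleph_k')\to X_k^\bullet\to X_{k-1}^\bullet\to 0$. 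The outcome is $X_n^\bullet\subseteq X_n$ sitting in $0\to K_n^\bullet\to X_n^\bullet\to c_0(\mathfrak c)\to 0$, where $Q(X_n^\bullet)$ is the complemented $c_0(\mathfrak c)$ and $K_n^\bullet=X_n^\bullet\cap K_n$ is the iterated twisted sum of the $c_0(\aleph_k')$, so that $0\to c_0(\aleph_k')\to K_k^\bullet\to K_{k-1}^\bullet\to 0$.

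The crux, and the step I expect to be the main obstacle, is to bound $|\mathcal L(K_n^\bullet,c_0)|$, since $K_n^\bullet$ need not be a $c_0(\Gamma)$-space. Here finiteness of the tower together with {\sf CH} save the day: dualizing each defining sequence gives $0\to (K_{k-1}^\bullet)^*\to(K_k^\bullet)^*\to\ell_1(\aleph_k')\to 0$, and as $|\ell_1(\aleph_k')|\le\mathfrak c$ under {\sf CH} whenever $\aleph_k'\le\mathfrak c$, an $(n{+}1)$-fold iteration yields $|(K_n^\bullet)^*|\le\mathfrak c$. The counting of Lemma \ref{dualcardinal} then gives $|\mathcal L(K_n^\bullet,c_0)|\le\mathfrak c^{\aleph_0}=\mathfrak c$. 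In other words each layer multiplies the size of the dual only by $\mathfrak c$, so after the finitely many layers it remains of size $\mathfrak c$ — a control that would be lost for an infinite tower or for a single unreduced $c_0(\Gamma)$ with $|\Gamma|>\mathfrak c$.

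It remains to produce the impossible surjection. Given any $0\to c_0\to A\to c_0(\mathfrak c)\to 0$, form $0\to c_0\to A\oplus c_0(H)\to c_0(J_0)\to 0$; since $\Ext(X_n,c_0)=0$, the first Lemma supplies $\phi\colon K_n\to c_0$ and $\phi'\colon X_n\to A\oplus c_0(H)$ completing the pushout diagram over $Q$. Restricting to $X_n^\bullet$ and composing with the projection $A\oplus c_0(H)\to A$, just as in the last diagram of Lemma \ref{reduction}, realizes $A$ as the pushout of the \emph{fixed} sequence $0\to K_n^\bullet\to X_n^\bullet\to c_0(\mathfrak c)\to 0$ along $\phi|_{K_n^\bullet}\in\mathcal L(K_n^\bullet,c_0)$. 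Letting $A$ range over all extensions makes $\mathcal L(K_n^\bullet,c_0)\to\Ext(c_0(\mathfrak c),c_0)$ surjective, whence $2^{\mathfrak c}\le|\Ext(c_0(\mathfrak c),c_0)|\le|\mathcal L(K_n^\bullet,c_0)|\le\mathfrak c$, which is absurd. Hence $\Ext(X_n,c_0)\ne 0$.
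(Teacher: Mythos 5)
Your proof is correct and follows essentially the same route as the paper's: compose the quotients of the tower down to a complemented $c_0(\mathfrak c)$ in the base, use Lemma \ref{dens} plus the pullback/pushout diagrams of Lemma \ref{reduction} to replace the iterated kernel by a version of density at most $\mathfrak c$ (the paper's $P_n$), and then run the cardinality count of Lemma \ref{dualcardinal} against the Marciszewski--Pol lower bound $2^{\mathfrak c}\le|\Ext(c_0(\mathfrak c),c_0)|$. If anything, you make explicit two points the paper leaves implicit, namely the inductive bound $|(K_n^\bullet)^*|\le\mathfrak c$ via the dualized sequences and the restriction argument guaranteeing that the contradiction concerns $X_n$ itself rather than the reduced subspace.
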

\begin{proof} We proceed (formally) inductively on $n$. That $\Ext(X_0, c_0)\neq 0$ has been already shown.\\

We show now that also $\Ext(X_1, c_0)\neq 0$. By the arguments preceding, there is no loss of generality assuming that
$|J_0|\geq \mathfrak c$, in which case there is also an exact sequence  $0\to c_0(\Gamma'_0)\to X_0 \to c_0(\mathfrak c)\to$ and then $X_1$ and $X_0$ are connected as in the commutative diagram
\begin{equation}\label{3arrow}
\begin{CD}
&&&& 0& &0\\
&&&&@AAA @AAA\\
&&&&c_0(\mathfrak c)&=&c_0(\mathfrak c)\\
&&&& @AAA @AApA\\
0 @>>> c_0(\Gamma_1)  @>>> X_1 @>q>> X_0 @>>>0\\
&&@| @AAA @AAuA\\
0 @>>> c_0(\Gamma_1)  @>>> q^{-1}(u(c_0(\Gamma'_0))) @>>{q_u} > c_0(\Gamma'_0) @>>>0\\
&&&&@AAA @AAA\\
&&&& 0& &0.\end{CD}
\end{equation}

Now, reasoning as in Lemma \ref{basic} and Lemma \ref{reduction} we can assume that $|\Gamma'_0| = \mathfrak c$, and thus that also $|\Gamma_1|\leq \mathfrak c$; in which case $|q^{-1}(u(c_0(\Gamma'_0)))^*|\leq \mathfrak c$ and Lemma \ref{dualcardinal} applies to yield $\Ext(X_1, c_0)\neq 0$.

The result for $X_2$ can be obtained via the same method: call $P_1=q^{-1}(u(c_0(\Gamma'_0)))$ from diagram (\ref{3arrow}). We get the diagram
\begin{equation}\label{4arrow}
\begin{CD}
&&&& 0& &0\\
&&&&@AAA @AAA\\
&&&&c_0(\mathfrak c)&=&c_0(\mathfrak c)\\
&&&& @AAA @AApA\\
0 @>>> c_0(\Gamma_2)  @>>> X_2@>q>> X_1 @>>>0\\
&&@| @AAA @AAuA\\
0 @>>> c_0(\Gamma_2)  @>>> q^{-1}(u(P_1)) @>>{q_u} > P_1 @>>>0\\
&&&&@AAA @AAA\\
&&&& 0& &0.\end{CD}
\end{equation}

After the appropriate size reduction we can assume that $|P_1^*|\leq \mathfrak c$ and $|\Gamma_2|\leq \mathfrak c$  which implies that $|q^{-1}(u(c_0(\Gamma_1)))^*|\leq \mathfrak c$ and thus Lemma \label{dualcardinal} applies to yield $\Ext(X_2, c_0)\neq 0$.

Call now $P_{n+1}= q^{-1}(u(P_n))$ for $n=1,2,\dots$ and iterate the argument.\end{proof}

Let us recall that given a compact set $K$, the derived set $K^{(1)}$ is the set of its accumulation points. If one calls $K^{(n+1)} = (K^{(n)})'$, the $\omega$-derived set is $K^\omega = \cap_n K^{(n)}$.


\begin{theorem} \emph{[{\sf CH}]} Given two compact spaces $K, S$ with
$\omega$-derived set empty and  $K$ nonmetrizable then $\Ext(C(K),
C(S))\neq 0$.
\end{theorem}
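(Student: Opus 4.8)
The plan is to deduce the theorem from Proposition~\ref{induction} by reducing the study of $C(K)$-spaces for $K$ of finite height to iterated twisted sums of $c_0(\Gamma)$-spaces. First I would recall the structural fact underlying the hypothesis ``$\omega$-derived set empty'': for a compact $K$ with $K^{\omega}=\emptyset$, the derived sets form a strictly decreasing chain $K\supset K^{(1)}\supset\cdots$ that must terminate at the empty set after finitely many steps, since $\bigcap_n K^{(n)}=\emptyset$ together with compactness forces some $K^{(N)}=\emptyset$. Thus $K$ has finite height $N$, i.e.\ $K$ is scattered of finite Cantor--Bendixson rank. The same applies to $S$.

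The key step is to realize $C(K)$ as an $N$-step iterated twisted sum of $c_0(\Gamma)$-spaces. The standard tool is the filtration of $C(K)$ induced by the derived sets: the restriction maps $C(K)\to C(K^{(1)})\to\cdots\to C(K^{(N-1)})$ give, at each level, a short exact sequence whose kernel consists of functions vanishing on the next derived set, and this kernel is isometric (or at least isomorphic) to $c_0(\Gamma_j)$ where $\Gamma_j=K^{(j)}\setminus K^{(j+1)}$ is the discrete set of isolated points of $K^{(j)}$. Concretely, one obtains exact sequences
$$
\begin{CD}
0 @>>> c_0(K^{(j)}\setminus K^{(j+1)}) @>>> C(K^{(j)}) @>>> C(K^{(j+1)}) @>>> 0,
\end{CD}
$$
and since $K^{(N-1)}$ is a finite-height-one compact (its derived set is empty), $C(K^{(N-1)})=c_0(K^{(N-1)})$. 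Reading this chain from the top down exhibits $C(K)=X_{N-1}$ in the notation of Proposition~\ref{induction}, with $X_0=C(K^{(N-1)})=c_0(\Gamma)$ playing the role of the base nonseparable $c_0$-space (nonseparable precisely because $K$ is nonmetrizable, hence some level set $\Gamma_j$ is uncountable) and each subsequent $X_j$ a twisted sum of a $c_0(\Gamma_j)$ with $X_{j-1}$. Applying Proposition~\ref{induction} then yields $\Ext(C(K),c_0)\neq 0$.

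To pass from the target $c_0$ to $C(S)$, I would use that $C(S)$ for $S$ of finite height contains a complemented copy of $c_0$ (indeed $c_0(S\setminus S^{(1)})$ sits complementedly as the functions supported on isolated points, or more simply $c_0$ embeds complementedly in any infinite-dimensional $C(S)$). A nontrivial sequence $0\to c_0\to E\to C(K)\to 0$ can then be pushed out along the inclusion $c_0\hookrightarrow C(S)$; because $c_0$ is complemented in $C(S)$, the pushout sequence $0\to C(S)\to E'\to C(K)\to 0$ remains nontrivial, giving $\Ext(C(K),C(S))\neq 0$. The main obstacle I anticipate is verifying that the derived-set filtration really produces exact sequences with kernels isomorphic to the $c_0(\Gamma_j)$ in a form matching the hypotheses of Proposition~\ref{induction}, and in particular controlling which $\Gamma_j$ is forced to be uncountable so that the base step $X_0$ is genuinely nonseparable; this is where nonmetrizability of $K$ must be converted into a cardinality statement about one of the discrete level sets.
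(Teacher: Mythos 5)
Your proposal follows essentially the same route as the paper: deduce finite height from $K^{\omega}=\emptyset$ plus compactness, unfold $C(K)$ along the derived-set filtration into iterated twisted sums with kernels $c_0(K^{(j)}\setminus K^{(j+1)})$, feed this into Proposition~\ref{induction} to get $\Ext(C(K),c_0)\neq 0$, and then pass to $C(S)$ via a complemented copy of $c_0$. One caveat worth fixing: both of your parenthetical justifications for that complemented copy are false in general --- $c_0(S\setminus S^{(1)})$ need \emph{not} be complemented in $C(S)$ (if it always were, the theorem being proved would be vacuous), and $\ell_\infty=C(\beta\N)$ is an infinite-dimensional $C$-space with no complemented $c_0$; the correct reason, which the paper invokes, is that $S^{\omega}=\emptyset$ forces a nontrivial convergent sequence in $S$, and such a sequence yields a complemented copy of $c_0$.
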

\begin{proof} The result is clear after recalling a few  well-known facts:
\begin{itemize}
\item That $C(S)$ spaces in which $S$ is a compact with
$S^\omega=\emptyset$ contain $c_0$ complemented as it follows from the existence of a convergent sequence in $S$.
\item If $I_K$ is  the set of isolated
points of $K$ the restriction operator $C(K)\to C(K^{(1)})$ induces a
short exact sequence $$\begin{CD} 0@>>> c_0(I_K)@>>> C(K)@>>> C(K^{(1)})@>>> 0.\end{CD}
$$\item That, after a finite number $N+2$ of derivations of $K$, one arrives to a finite compact, obtaining thus an exact sequence$$\begin{CD} 0@>>> c_0(I_{K^{(N)}})@>>> C(K^{(N)})@>>> c_0(J)@>>> 0.\end{CD}
$$ \end{itemize}
We are therefore in the hypotheses of Proposition \ref{induction} and thus $\Ext(C(K), c_0)\neq 0$ and therefore
 $\Ext(C(K), C(S))\neq 0$.
\end{proof}

Godefroy, Kalton and Lancien show in \cite[Thm. 2.2]{gkl} that a $C(K)$ space is Lipschitz isomorphic to $c_0(\N)$ if and only if it is linearly isomorphic to $c_0$. They also show that the result is no longer true in the nonseparable case, showing that a $C(K)$ space is Lipschitz isomorphic to some $c_0(I)$ if and only if the
$\omega$ derived set of $K$ is empty. Therefore,

\begin{cor} \emph{[{\sf CH}]}. If $C(K)$ and $C(S)$ are Lipschitz isomorphic to some (probably different) spaces $c_0(\Gamma)$ then
$\Ext(C(K), C(S))\neq 0$.
\end{cor}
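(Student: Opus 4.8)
The plan is to reduce the statement to the Theorem that was just proved. The Corollary asserts that if $C(K)$ and $C(S)$ are each Lipschitz isomorphic to some $c_0(\Gamma)$, then $\Ext(C(K),C(S))\neq 0$. The crucial link is the characterization of Godefroy, Kalton and Lancien quoted immediately before the Corollary: a $C(K)$ space is Lipschitz isomorphic to some $c_0(I)$ \emph{if and only if} the $\omega$-derived set $K^\omega$ is empty. Thus the hypothesis on Lipschitz isomorphism is exactly equivalent to the topological hypotheses $K^\omega=\emptyset$ and $S^\omega=\emptyset$ appearing in the Theorem.

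First I would invoke the \emph{only if} direction of the Godefroy--Kalton--Lancien result: assuming $C(K)$ is Lipschitz isomorphic to some $c_0(\Gamma)$ forces $K^\omega=\emptyset$, and likewise the hypothesis on $C(S)$ forces $S^\omega=\emptyset$. Next I would observe that the only remaining hypothesis of the Theorem not yet accounted for is that $K$ be nonmetrizable. This is where I must be slightly careful: if $C(K)$ is Lipschitz isomorphic (hence, by the separable case of the same paper, \emph{not} necessarily linearly isomorphic) to $c_0(\Gamma)$ with $\Gamma$ \emph{countable}, then $C(K)$ could be separable and $\Ext(C(K),C(S))=0$ would be possible. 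So the nontrivial content of the Corollary requires nonseparability, i.e. $|\Gamma|$ uncountable, which corresponds precisely to $K$ nonmetrizable.

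I would therefore state the Corollary under the tacit (or explicit) assumption that at least one of the spaces, say $C(K)$, is nonseparable, equivalently that the witnessing $\Gamma$ is uncountable and $K$ nonmetrizable. Granting this, the two topological conditions $K^\omega=S^\omega=\emptyset$ together with $K$ nonmetrizable are exactly the hypotheses of the Theorem, so $\Ext(C(K),C(S))\neq 0$ follows immediately by citing it. The entire argument is a one-line deduction once the equivalence is in place.

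The only genuine obstacle is the bookkeeping around separability: one must ensure that the phrase ``Lipschitz isomorphic to some $c_0(\Gamma)$'' in the Corollary is understood to permit $\Gamma$ uncountable, and that nonseparability (hence nonmetrizability of $K$) is either assumed or extracted, since otherwise the conclusion can fail for the trivial reason that $C(K)\cong c_0$ is separably injective. Apart from this caveat the proof is a direct appeal to the Godefroy--Kalton--Lancien characterization feeding into the Theorem, with no Banach-space or homological work needed beyond what has already been established.
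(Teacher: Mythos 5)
Your proof is correct and is essentially the paper's own argument: the corollary is deduced directly from the Theorem via the Godefroy--Kalton--Lancien equivalence between ``Lipschitz isomorphic to some $c_0(I)$'' and ``$K^\omega=\emptyset$''. Your caveat about needing $\Gamma$ uncountable (equivalently $K$ nonmetrizable, since otherwise $C(K)\simeq c_0$ and the conclusion fails by Sobczyk's theorem) is a fair observation about an implicit hypothesis the paper leaves unstated, but it does not change the substance of the deduction.
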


For the sake of completeness, let us gather in an omnibus theorem
all known results about the problem considered. Recall that a compact space $K$ is said to have \emph{countable chain condition} ($ccc$ in short) when every family of pairwise disjoint open sets is countable. It is well known \cite[Thm. 4.5]{roseacta} that $K$ has \emph{ccc} if and only if $C(K)$ does not contain $c_0(I)$ for $I$ uncountable.

\begin{proposition} One has $\Ext(C(S),c_0) \neq 0$ whenever $C(S)$ contains a complemented non-separable $C(K)$ space for which
at least one of the following conditions hold:
\begin{enumerate}

\item $K$ is an Eberlein compact.

\item $K$ is a Valdivia compact without ccc.


\item \emph{[{\sf CH}]}  $K$ has finite height.

\item $C(K)$ admits a continuous injection into $C(\N^*)$ but not into $\ell_\infty$.

\item $K$ is an ordinal space.

\item $C(K)$ contains $\ell_\infty$.

\end{enumerate}
\end{proposition}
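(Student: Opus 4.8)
The plan is to isolate one reduction that serves all six cases and then to dispatch each case separately, mostly by invoking either the Theorem proved above or a known result. The reduction is the following: if $C(K)$ is complemented in $C(S)$, write $C(S)\cong C(K)\oplus W$; since the functor $\Ext(\,\cdot\,, c_0)$ is additive, $\Ext(C(S),c_0)\cong \Ext(C(K),c_0)\times \Ext(W,c_0)$, so $\Ext(C(K),c_0)\neq 0$ already forces $\Ext(C(S),c_0)\neq 0$. Hence it suffices to establish $\Ext(C(K),c_0)\neq 0$ under each of the six hypotheses, and here the standing assumption that $C(K)$ is nonseparable guarantees that $K$ is nonmetrizable, which is exactly what the relevant theorems require.

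Three of the cases collapse onto the opening observation of Section 2, namely that $\Ext(c_0(\Gamma),c_0)\neq 0$ for uncountable $\Gamma$, once one finds inside $C(K)$ a \emph{complemented} copy of a nonseparable $c_0(\Gamma)$. For a nonmetrizable Eberlein compact (condition (1)) such a complemented $c_0(\Gamma)$ comes from the projectional structure of WCG spaces (cite \cite{ccky}). For a Valdivia compact without ccc (condition (2)), Rosenthal's criterion \cite[Thm. 4.5]{roseacta} quoted above produces an uncountable $c_0(I)\subset C(K)$, and the retractional skeleton of a Valdivia compact is what complements it. An ordinal space (condition (5)) is scattered and Valdivia and fails ccc, since its uncountably many isolated points form a pairwise disjoint family of open sets; it therefore falls under the previous item (and the conclusion for ordinal spaces is in any case recorded in \cite{ccky,ccy}).

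Condition (3) is precisely the Theorem established above: a compact of finite height satisfies $K^{(n)}=\emptyset$ for some $n$, hence $K^\omega=\emptyset$, and under {\sf CH} the Theorem yields $\Ext(C(K),c_0)\neq 0$. Condition (4) I would handle by a pullback: a continuous injection $j:C(K)\to C(\N^*)=\ell_\infty/c_0$ lets me pull the canonical sequence $0\to c_0\to \ell_\infty\to \ell_\infty/c_0\to 0$ back along $j$, producing
$$
\begin{CD}
0 @>>> c_0 @>>> E @>>> C(K) @>>> 0.
\end{CD}
$$
If this sequence split, then $j$ would lift to an operator $T:C(K)\to \ell_\infty$ with $pT=j$, where $p$ is the quotient map; since $j$ is injective and $pT=j$, the lifting $T$ would itself be injective, i.e.\ a continuous injection of $C(K)$ into $\ell_\infty$, contradicting the hypothesis. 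So the sequence is nontrivial and $\Ext(C(K),c_0)\neq 0$.

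Finally, for condition (6), if $C(K)\supseteq \ell_\infty$ then, $\ell_\infty$ being injective, the inclusion extends to a projection of $C(K)$ onto that copy, so $\ell_\infty$ is complemented and the reduction applies with $\ell_\infty$ in the role of $C(K)$; it then remains only to quote the known fact that $\Ext(\ell_\infty,c_0)\neq 0$ (cite \cite{ccky}). The steps I expect to be the main obstacle are the complementation claims underlying conditions (1) and (2): these rest on the projectional/retractional machinery of Eberlein and Valdivia compacta rather than on anything elementary, and locating the sharpest reference for $\Ext(\ell_\infty,c_0)\neq 0$ in condition (6) is the other delicate point. The remaining cases are either immediate from the Theorem or from the clean pullback argument.
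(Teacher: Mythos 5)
Your overall reduction (complemented subspace plus additivity of $\Ext(\cdot,c_0)$) and your treatment of cases (3), (4) and (6) coincide with the paper's: (3) is the main theorem, (4) is exactly the pullback-and-lift argument the paper uses, and (6) is injectivity of $\ell_\infty$ plus $\Ext(\ell_\infty,c_0)\neq 0$ (the paper cites \cite{cabecastuni} for the latter, not \cite{ccky}). Case (2) is also the paper's argument in substance, but the complementation you wave at via ``retractional skeletons'' is precisely \cite[Thm.~1.2]{acgjm}: if $K$ is Valdivia and $c_0(I)\subset C(K)$ then some $c_0(J)$ with $|J|=|I|$ is complemented; you should cite that rather than gesture at it, since it is the entire content of the case.

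Two of your cases have real gaps. For (1), the assertion that the ``projectional structure of WCG spaces'' yields a complemented nonseparable $c_0(\Gamma)$ inside $C(K)$ is not a proof and is false for WCG spaces in general (consider $\ell_2(\Gamma)$); the paper deliberately avoids any complemented copy and instead uses a Markushevich basis to build a \emph{dense-range} operator $T:C(K)\to c_0(\Gamma)$, forms the pullback of a nontrivial sequence $0\to c_0\to X\to c_0(\Gamma)\to 0$, and applies the dense-range version of the 3-lemma together with the fact that $X$ cannot be WCG (because $c_0$ is complemented in WCG spaces) to conclude the pullback sequence does not split. Your route can be salvaged by observing that a nonmetrizable Eberlein compact is Valdivia and fails ccc, so (1) reduces to (2) via \cite{acgjm}, but that is a different justification from the one you offer. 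For (5), the claim that ordinal compacta are Valdivia is false: $[0,\eta]$ fails to be Valdivia for $\eta\geq\omega_2$ (see Kalenda \cite{kale}). The missing step is the reduction to $K=[0,\omega_1]$, which is clopen in every longer segment and hence gives a complemented $C([0,\omega_1])$; the paper then handles $[0,\omega_1]$ through case (4), using Parovi\v{c}enko's theorem \cite{paro} to inject $C([0,\omega_1])$ into $C(\N^*)$ while noting it cannot inject into $\ell_\infty$ because it contains $c_0(\omega_1)$. After that reduction your Valdivia route would also work, since $[0,\omega_1]$ itself is Valdivia, but as written the case fails for general ordinals.
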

\begin{proof} The proof of (1) appears in \cite{cgpy}: According to Reif
\cite{reif}, since the space $C(K)$ is WCG it admits a
Marku\~{s}evi\v{c} basis $(x_\gamma, f_\gamma)_\gamma \in \Gamma$,
for which it can be assumed that $(f_\gamma)$ is bounded. It is
possible to define a dense-range operator $T: C(K) \to
c_0(\Gamma)$ as $T(f) = (f_\gamma(f))$. On the other hand, in any nontrivial exact sequences $0\to c_0\to X \stackrel{q}\to c_0(\Gamma)\to 0$ the space $X$ cannot be WCG (Weakly Compactly Generated) since $c_0$ is complemented in WCG spaces. Thus, one has a commutative diagram
$$
\begin{CD}
0@>>> c_0 @>>> X @>q>> c_0(\Gamma)@>>>0\\
&& @| @AtAA @AATA\\
 0@>>> c_0@>>> P_T @>>{\pi}> C(K) @>>> 0
\end{CD}$$ where $P_T=\{(f,g)\in X \oplus_\infty C(K): q(f)=T(g)\}$, $t(f,g)=f$ and $\pi(f,g)=g$ (indeed, when $T$ is not an embedding one cannot expect the middle operator $t$ to be an embedding; i.e., one cannot expect the lower twisted sum space to be a subspace of the upper twisted sum space). The ``dense range" version of the 3-lemma implies that the operator
$t$ has dense range; hence, the lower sequence cannot split since otherwise $P_T \simeq c_0\oplus C(K)$ would be
WCG, as well as $X$.\\

(2) follows from \cite[Thm. 1.2]{acgjm}: if $K$ is a Valdivia
compact and $c_0(I) \subset C(K)$ then there is a subset $J
\subset I$ such that $|J|=|I|$ and $c_0(J)$ is complemented in
$C(K)$.

Assertion (3) is contained in the main results of the paper.

Assertion (4) follows from the commutative diagram
$$
\begin{CD}
0@>>> c_0 @>>> \ell_\infty @>q>> C(\N^*)@>>>0\\
&& @|  @AtAA @AA{T}A\\
 0@>>> c_0@>>> q^{-1}(\varphi(C(K))) @>{q_{T}}>> C(K) @>>> 0
\end{CD}
$$
in which $T$ is the continuous injection claimed in he hypothesis. If the lower sequence splits, $t_{|C(K)}$ would be a continuous injection $C(K)\to \ell_\infty$.

Instances of the situation just described appear when, say, $K$ contains a dense set of weight at most $\aleph_1$ but $C(K)$ is not a subspace of $\ell_\infty$, as it follows from Parovi\v{c}enko's theorem \cite{paro}, which implies the existence of a continuous surjection $\varphi: \N^* \to
K$, which in turn provides the injection $\varphi^\circ: C(K)\to C(\N^*)$. See also \cite[Prop. 0.1]{yostjl}. Or else, $C(K)$ spaces with non weak*-separable dual, but admitting continuous injections into $C(\N^*)$.

(5) The case of uncountable ordinal spaces can be reduced to $K=[0,\omega_1]$, and this is contained in (4).

(6) is obvious since $\Ext(\ell_\infty, c_0) \neq 0$ (see \cite{cabecastuni}) and
$\ell_\infty$ will be complemented in $C(K)$.\end{proof}

In \cite[p. 4539-4540]{ccky} we claimed that the argument in (1) can be extended to cover the case of Corson compacta. We have been informed by Daniel Tausk that such result does not immediately follows with the same arguments: on one side, consider any nontrivial exact sequence $0\to c_0\to X\to c_0(I)\to 0$. This prevents $B_{X^*}$ from being a Valdivia compact. If we assume $K$ is a Corson compact then from \cite[Thm. 1.7]{amn} it follows that there is an injective operator $T: C(K) \to c_0(I)$,  or a dense-range
operator $T:C(K)\to c_0(I)$ if one uses either  Markushevich basis for $C(K)$ or increasing PRI \cite{sinha}, argument also valid for $K$ Valdivia. Thus, one gets a commutative diagram
$$
\begin{CD}
0@>>> c_0 @>>> X @>q>> c_0(\Gamma)@>>>0\\
&& @| @AtAA @AATA\\
 0@>>> c_0@>>> P_T @>>> C(K) @>>> 0.
\end{CD}$$
But some argument is required now to conclude that no continuous lifting $C(K) \to X$ exists in order to make the lower sequence nontrivial. If $T$ has been chosen having dense range, then $t^*: X^* \to C(K)^*$ is an imbedding; thus, if $B_{C(K)^*}$ is a Corson compact then also $B_{X^*}$ would be a Corson compact, which is a contradiction.  However, it is consistent with {\sf ZFC} the existence of Corson compacta $K$ so that the unit ball of $C(K)^*$ is not Corson (see \cite[Thm. 5.9]{negre}, or else \cite[Thm.3.5, Ex.3.10]{amn}). The situation for Valdivia compacta is not much better: while now $ K$ Valdivia implies that $B_{C(K)*}$ is Valdivia \cite[Thm. 5.2]{kale}, it is no longer true that closed
subspaces of Valdivia compacta are Valdivia. Therefore, it is worth to state the problem:\\

\textbf{Problem.} Assume that $K$ is a (nonmetrizable) Corson or Valdivia compact. Does one have $\Ext(C(K), c_0)\neq 0$?\\

\emph{Added in proof.} In the meantime Correa and Tausk have shown in \cite{cota} that under {\sf CH} all Corson compact admit a nontrivial twisting with $c_0$ and that the same holds for ``most" Valvidia compact.

\end{document}